\numberwithin{equation}{section}
\def\irr#1{{\Irr}(#1)}
\def\irrp#1{{\Irr}_{p'}(#1)}
\def\galn#1#2{{\rm Gal}(#1/#2)}
\def\Oh#1#2{{\bf O}^{#1}(#2)}
\def\zent#1{{\bf Z}(#1)}
\def\syl#1#2{{\rm Syl}_{#1}(#2)}
\def\nor{\triangleleft}
\def\norm#1#2{{\bf N}_{#1}(#2)}
\def\cent#1#2{{\bf C}_{#1}(#2)}
\def\aut#1{{\rm Aut}(#1)}
\let\phi=\varphi
\def\sbs{\subseteq}
\newtheorem{lem}[subsection]{Lemma}
\newtheorem{teo}[subsection]{Theorem}
\newtheorem*{teo*}{Theorem}
\newtheorem*{conM}{The McKay Conjecture}
\theoremstyle{definition}
\newtheorem*{rem}{Remark}
\theoremstyle{definition}
\def\Q{\mathbb Q}
\def\Z{\mathbb Z}
\newcommand{\Irr}{\operatorname{Irr}}
\begin{document}

\author{Carolina Vallejo Rodr\'iguez}
\address{Departamento de Matem\'aticas,  
Universidad Aut\'onoma de Madrid, Campus de Cantoblanco, 28049 Madrid, Spain}
\email{carolina.vallejo@uam.es}


\thanks{The author is partially supported by the Spanish Ministerio
de Econom\'ia, Industria y Competividad proyectos MTM2016-76196-P and  MTM2017-82690-P, and the ICMAT Severo Ochoa
project SEV-2011-0087.}

\keywords{Character correspondences, solvable groups}

\subjclass[2010]{20C15}

\title[Character correspondences in solvable groups]{Character correspondences in solvable groups with a self-normalizing Sylow subgroup}

\begin{abstract} Let $G$ be a finite solvable group and let $P\in \syl p G$ for some prime $p$. Whenever $|G:\norm G P|$ is odd,  I. M. Isaacs described a correspondence between irreducible characters of degree not divisible by $p$ of $G$ and $\norm G P$. This correspondence is {\em natural} in the sense that an algorithm is provided to compute it, and the result of the application of the algorithm does not depend on choices made. In the case where $\norm G P= P$, G. Navarro showed that every irreducible character $\chi$ of degree not divisible by $p$ has a unique linear constituent $\chi^*$ when restricted to $P$, and that the map $\chi \mapsto \chi^*$ defines a bijection. Navarro's bijection is obviously natural in the sense described above. We show that these two correspondences are the same under the intersection of the hypotheses. \end{abstract}

\maketitle

\section*{Introduction}
In 1972, J. McKay foresaw that, for a finite simple group $G$, the number of odd-degree irreducible characters in $G$ is the same as the number of odd-degree irreducible characters in the normalizer of a Sylow 2-subgroup of $G$. This astonishing conjecture (which has been recently confirmed in \cite{MS16} for every finite group) was generalized by various authors to other families of groups and other primes until it took the form we know nowadays.
\begin{conM}Let $G$ be a finite group and let $P\in \syl p G$ for any prime $p$. Then
$$|\irrp G|=|\irrp {\norm G P}|,$$
where $\irrp G$ denotes the set of irreducible characters of $G$ of degree coprime to $p$.
\end{conM}

By saying a McKay bijection for $G$ and $p$ (where $p$ will be generally clear from the context) we mean a bijection $\irrp G \rightarrow \irrp{\norm G P}$.

\smallskip

In 1973, I. M. Isaacs created an algorithm to compute a McKay bijection whenever $G$ is solvable and $|G:\norm G P|$ is odd. Such algorithm is rather involved. In fact, if $\xi$ corresponds to $\chi$ under the algorithm, then $\xi$ is not always a constituent of the restriction $\chi_{\norm G P}$ of $\chi$ to $\norm G P$. (However, Isaacs showed that $\xi$ is a constituent of $\chi_{\norm G P}$ if $G$ has odd-order). Later on, precisely 30 years later, G. Navarro proved that a much simpler algorithm could be used to compute a McKay bijection whenever $G$ was a $p$-solvable group with a self-normalizing Sylow $p$-subgroup. He proved that for every $\chi \in \irrp G$, the restriction $\chi_P$ contains a unique linear character $\lambda_\chi \in \irr P$. The algorithm is clear in this situation, $\chi \mapsto \lambda_\chi$.

The question whether the application of these two algorithms gives the same result when the hypotheses meet has been raised by I. M. Isaacs and G. Navarro during the ``Group Representation Theory and Applications" program at the MSRI. 
In this short note, we positively answer this question in Theorem \ref{main}. The key is a character extension result, namely Theorem \ref{key}.

\smallskip

This is good news because natural bijections, as described in the abstract, are hoped to be unique. 

\smallskip

Throughout this note we follow the notation of \cite{Isa76} for ordinary characters. By a constituent of a character, we will mean an irreducible constituent. Indeed, if $\Delta$ is a character of some finite group $H$, then
$$\Delta=\sum_{\psi\in \irr H} a_\psi \psi,$$
for $a_\psi \in \Z_{\geq 0}$. We say that $\psi$ is a constituent of $\Delta$ if $a_\psi \neq 0$. 

If $N \nor G$ and $\chi \in \irr G$, then we say that $\theta$ lies under $\chi$ if $\theta$ is a constituent of $\chi_N$. We also say that $\chi$ lies over $\theta$ in this situation and write $\chi \in \irr{G|\theta}$.

\medskip

\noindent {\bf Acknowledgements.}
The author is indebted to the anonymous referee for valuable comments that improved the exposition of this paper. She would also like to thank Jon Gonz\'alez-S\'anchez and Gabriel Navarro for useful conversations on this subject.

\section{Isaacs' 1973 correspondece}
In 1973, only one year after the McKay conjecture was proposed, I. M. Isaacs described McKay bijections for solvable groups under a certain condition. In the beginning, he was interested in constructing a Glauberman-type character correspondence in the case of coprime actions on groups of odd-order. Not only did he succeed on that initial goal, but also he established the ideal way of solving the McKay conjecture. 

\begin{teo}[Isaacs, 1973]\label{Isaacs}
Let $G$ be solvable and let $P\in \syl p G$. Assume $|G:\norm G P |$ is odd. Then there exists a natural bijection between $\irrp G$ and $\irrp {\norm G P}$.
\end{teo}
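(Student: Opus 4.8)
The plan is to build the correspondence through an explicit sequence of reduction steps and to verify, by induction on $|G|$, that each step is available and \emph{canonical}, so that the bijection it produces does not depend on the choices made. To keep the induction self-sustaining I would prove a relative version of the statement, keeping track of a normal subgroup $M\nor G$ and a $P$-invariant $\mu\in\irr M$ (so that the conclusion concerns $\irrp{G|\mu}$ and the corresponding set of characters of $\norm G P$). If $P\nor G$ then $\norm G P=G$ and the identity works, so assume $\norm G P<G$ and fix a minimal normal subgroup $N$ of $G$; since $G$ is solvable, $N$ is an elementary abelian $\ell$-group for some prime $\ell$.

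First I would localise the characters of $N$ that matter. If $\chi\in\irrp G$ lies over $\theta\in\irr N$, then $|G:I_G(\theta)|$ divides $\chi(1)$ and is therefore prime to $p$, so $I_G(\theta)$ contains a Sylow $p$-subgroup of $G$; after replacing $\theta$ by a $G$-conjugate we may assume $\theta$ is $P$-invariant, and then $P\in\syl p{I_G(\theta)}$. A Frattini argument shows that the $P$-invariant characters in one $G$-orbit on $\irr N$ form a single $\norm G P$-orbit, so $\irrp G$ is the disjoint union, over representatives $\theta$ of the $\norm G P$-orbits of $P$-invariant characters of $N$, of the sets $\irrp{G|\theta}$; and one sets up the matching decomposition inside $\norm G P$. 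By the Clifford correspondence $\irrp{G|\theta}$ is in bijection with $\irrp{I_G(\theta)|\theta}$ (the index $|G:I_G(\theta)|$ being prime to $p$), so everything reduces to matching $\irrp{I_G(\theta)|\theta}$ with the corresponding piece of $\norm G P$, for each relevant $\theta$.

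When $\ell\ne p$, the $p$-group $P$ acts coprimely on $N$, so the Glauberman correspondence supplies a canonical bijection $\theta\mapsto\theta^*$ from the $P$-invariant characters of $N$ onto $\irr{\cent N P}$, commuting with the action of $\norm G P$ (this group normalising $N$ and $P$, hence $\cent N P$). Equivariance identifies $I_G(\theta)\cap\norm G P$ with $I_{\norm G P}(\theta^*)$, and the Clifford correspondence downstairs turns the problem into producing, for each $\theta$, a canonical bijection $\irrp{I_G(\theta)|\theta}\to\irrp{I_G(\theta)\cap\norm G P\,|\,\theta^*}$; when $I_G(\theta)<G$ this is precisely the relative inductive hypothesis applied to $I_G(\theta)$. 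When $\ell=p$ the Glauberman correspondence is unavailable, but the bookkeeping is easier: writing $C=\cent N P$, if $C$ is not normal in $G$ then --- $N$ being minimal normal --- every $P$-invariant $\theta\in\irr N$ is either trivial (pass to $G/N$) or not $G$-invariant (so $I_G(\theta)<G$, induct); and if $C\nor G$, i.e.\ $N\le\zent P$, one again splits off the characters with $I_G(\theta)<G$, leaving the $G$-invariant ones to the argument below.

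The case that survives in every branch --- and the main obstacle --- is that of a $G$-invariant $\theta$. Inflation from $G/\ker\theta$ settles it unless $\theta$ is faithful, and then $N\nor G$ together with $G$-invariance forces $N\le\zent G$, whence minimality forces $N\cong C_\ell$ of prime order. One is then left with the character triple $(G,N,\theta)$, with $N$ central of prime order and $\theta$ faithful, where a naive induction on $|G|$ stalls: here I would invoke Gallagher's theorem when $\theta$ extends to $G$ (combined with the theorem for $G/N$), and character-triple isomorphisms together with the Glauberman correspondence for character triples otherwise, everything arranged so that nothing depends on the choices of $N$, of orbit representatives, or of the character extensions used along the way. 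This is exactly where the hypothesis that $|G:\norm G P|$ is odd does its work --- it guarantees the canonical extensions of characters across odd-order sections that the construction needs --- and the real content of the theorem is to verify that both this naturality and the oddness hypothesis itself survive passage to all the subgroups and quotients arising in the reduction.
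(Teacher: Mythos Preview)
The paper does not prove this theorem: it is quoted as Isaacs' 1973 result, with the reader referred to \cite{Isa73} (specifically Theorems 9.1 and 10.6/10.9 there) for the construction. So there is no proof in the paper to compare against; what can be compared is your outline against Isaacs' actual construction as described in the paper and in \cite{Isa73}.

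Your outline has the right broad architecture --- induction, Clifford reduction to a $P$-invariant constituent, Glauberman correspondence in the coprime case --- and you correctly isolate the hard case as the one where the invariant character is faithful on a central subgroup of prime order. But the treatment of that case is where the real gap lies. Saying ``character-triple isomorphisms together with the Glauberman correspondence for character triples'' does not name a construction: the whole difficulty is to produce a \emph{canonical} bijection when $\theta$ does not extend, and character-triple isomorphisms are only defined up to choices. Isaacs' actual method is different in organisation and in substance: he works not with a minimal normal subgroup but with the section $K/L$ where $K=\Oh{p}{G}$ and $L=K'$, so that $K/L$ is an abelian $p'$-group, and then analyses the \emph{fully ramified} case (where $\theta\in\irr L$ is $G$-invariant and $\theta^K=e\phi$ with $e^2=|K/L|$) via the symplectic structure on $K/L$ and a special character $\psi$ of a complement (Theorem 9.1 of \cite{Isa73}). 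The oddness of $|G:\norm G P|$ enters because it forces $|K/L|$ to be odd, which is what makes the construction of $\psi$ canonical. Your sketch neither reaches this structure nor supplies a substitute for it; as written, the central-prime case is a restatement of the problem rather than a resolution.
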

The two main features of the above bijection are that it is equivariant with respect to the action of $\aut G _P$ on characters as well as equivariant with respect to the action of $\galn {\Q^{ab}}\Q$ on characters. (Although these are non-trivial facts, we do not intend to prove them here, since that would unnecessarily lengthen this exposition.)

The $\galn {\Q^{ab}}\Q$-equivariance implies that the multisets $\{ \{ \Q(\chi) \ | \chi \in \irrp G \}\}$ and $\{ \{ \Q(\xi) \ | \xi \in \irrp {\norm G P}  \}\}$ of field extensions of $\Q$ are the same. For a character $\psi$ of some group $H$, the field of values of $\psi$ over $\Q$ is defined as $\Q(\psi)=\Q(\psi(h) \ | \ h \in H)$. This property does not hold in general, not even in solvable groups, as shown by  ${\rm GL}(2,3)$ and $p=3$. However, G. Navarro proposed in 2004 that there should exist McKay bijections preserving fields of values over $\Q_p$, the field of $p$-adic numbers. This is known as the Galois-McKay conjecture, see \cite{Nav04}.

\smallskip

In general, the existence of McKay bijections equivariant with respect to the action of group automorphisms stabilizing the Sylow $p$-subgroup was conjectured by E. C. Dade (see Conjecture 9.13 of \cite{Nav18}).

\smallskip

There is a huge machinery underlying Theorem \ref{Isaacs} that culminated in the complete description of the character theory of fully ramified sections. This is done in Theorem 9.1 of \cite{Isa73}, and the description depends on a special character $\psi$ arising in such context. Note that $\psi$ does not generally contain the trivial character as a constituent. This fact can be already observed in the group ${\rm SL}(2,3)$.

\begin{rem}
As we have already mentioned in the Introduction, if $\chi\in \irrp G$ and $\xi\in \irrp{\norm G P}$ correspond under the bijection in Theorem \ref{Isaacs}, it is not true that $\xi$ is a constituent of $\chi_{\norm G P}$ (see the last paragraph of Section 9 of \cite{Isa73}). This situation is related to the character theory of fully ramified section briefly discussed above. We give next a specific example.

\smallskip

Let $K$ be extraspecial of order $3^3$ and exponent 3 
and let $H={\rm SL}(2,3)$. The group $H$ acts naturally on $K$ by fixing $L=\zent K$ and acting faithfully on $K/L$. Let $P \in \syl 2 H$, so that $\norm G P=L\times H$. If $1_L\neq \theta \in \irr L$, then $\theta$ is $G$-invariant and fully-ramified with respect to $K/L$. Let $\phi \in \irr K$ be over $\theta$. Since $\phi$ is $G$-invariant, then $\phi$ extends canonically to $\tilde \phi \in \irr{PK}$ by Corollary 6.28 of \cite{Isa76}. The extension $\tilde \phi$ is also $G$-invariant and thus extends to $G$ by Corollary 11.22 of \cite{Isa76}. In particular, $\phi$ extends to $G$. Then $\mathscr C=(G,K, \norm G P, L, \phi, \theta)$ is a constellation (with prime $\ell =3$) as in Definition 4.5 of \cite{Isa73} (the element $x \in \zent P$ of order 2 gives the desired automorphism of $G$). By Theorem 5.6 of \cite{Isa73}, the canonical character $\psi$ arising from $\mathscr C$ is a faithful, non-necessarily irreducible, character of $G/K\cong H$. By Theorem 3.5 \cite{Isa73}, $\psi$ has degree 3. In particular, $\psi$ must be the sum of a linear character and an irreducible character of  degree 2 of $G/K\cong H$.  Applying Theorem 5.7, and since $\cent {K/L} x=1$ for every $x \in P$, we get that $\psi(x)=-1$ whenever $x \in H$ has order 2, and $\psi(x)=1$ whenever $x \in H$ has order 4. By Corollary 6.3 of \cite{Isa73}, $\psi(x)=\pm\sqrt{- 3}$ whenever $x \in H$ has order 3. This latter equality forces the linear constituent of $\psi$ to be non-trivial. Two characters $\chi\in \irr{G|\phi}$ and $\xi \in \irr{\norm G P|\theta}$ of odd degree correspond under the bijection of Theorem \ref{Isaacs} if, and only if, $\chi_{\norm G P}=\psi_{\norm G P} \xi$ (as in Theorem 7.1 of \cite{Isa73}).

\smallskip

Another example of this situation was constructed in Section 3 of \cite{FN95}. There the authors were interested in characters correspondences arising under coprime actions of groups.
 \end{rem}

\section{Navarro's 2003 correspondence}
In 2003, G. Navarro studied McKay bijections under the key hypothesis that groups had a self-normalizing Sylow $p$-subgroup. 

Note that if a Sylow $p$-subgroup $P$ is self-normalizing in $G$, then 
$$\irrp {\norm G P}={\rm Lin}(P)=\{ \lambda \in \irr P \ | \lambda(1)=1\}\, .$$
\begin{teo}[Navarro, 2003]\label{Navarro} Let $G$ be $p$-solvable and let $P\in \syl p G$. Assume that $\norm G P =P$. If $\chi \in \irrp G$, then 
$$\chi_P=\chi^*+\Delta,$$
where $\chi^*\in {\rm Lin}(P)$ and $\Delta$ is either zero or every constituent of $\Delta$ has degree divisible by $p$. 
Furthermore the map defined by $\chi \mapsto \chi^*$ yields a bijection. 
\end{teo}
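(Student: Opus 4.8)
The plan is to prove Theorem~\ref{Navarro} by induction on $|G|$, reducing to the case where $G$ is $p$-solvable with a self-normalizing Sylow $p$-subgroup and exploiting a minimal normal subgroup $N$. First I would record the observation already made: since $\norm G P = P$, Sylow's theorem gives $\irrp{\norm G P} = {\rm Lin}(P)$, so the target set has the right size by the McKay conjecture for $p$-solvable groups (which is a theorem, but here one wants the constructive version). The real content is the explicit ``unique linear constituent'' statement. For the existence part, let $N \nor G$ be minimal normal; since $G$ is $p$-solvable, $N$ is either a $p$-group or a $p'$-group. If $N$ is a $p'$-group, pick $\theta \in \irr N$ under $\chi$; by coprimality and the fact that $P$ normalizes some $G$-conjugate of $\theta$, one passes to the stabilizer $T = G_\theta$, uses the Clifford correspondence to replace $\chi$ by its Clifford correspondent over $\theta$, and applies Gallagher/character-triple machinery (Isaacs' Chapter 11) to push the problem into $T$, where $PN/N$ is again self-normalizing. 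If $N \leq P$ is a $p$-group, one works modulo $N \cap \ker\chi$ or uses that $N\zent P$-type arguments and the linear constituents of $\chi_N$ interact well with restriction to $P$.

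The key technical input is surely the character extension result the author advertises as Theorem~\ref{key}: one expects that in the fully-ramified / stabilizer situations that arise, a suitable linear character of a normal subgroup of $P$ must \emph{extend}, and that this is what forces exactly one linear constituent to survive in $\chi_P$ rather than zero or several. Concretely, after reducing to a character triple $(G,N,\theta)$ with $\theta$ linear and $G$-invariant, the degree of $\chi$ coprime to $p$ combined with $\norm G P = P$ should force $\theta$ to extend to $G$ (no fully ramified obstruction can appear because the relevant section would have a nontrivial normalizer quotient), and then Gallagher's theorem parametrizes $\irr{G|\theta}$ by $\irr{G/N}$, matching linear constituents of restrictions to $P$ with those one level down.

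For uniqueness I would argue that two distinct linear constituents $\lambda_1, \lambda_2$ of $\chi_P$ would, via the inductive description, correspond to two distinct characters of $G$ both mapping to the same Clifford/Gallagher data, contradicting injectivity established inductively; alternatively one counts: summing the equation $\chi_P = \chi^* + \Delta$ over $\chi \in \irrp G$ and using that the total number of linear constituents (with multiplicity) of $\sum_\chi \chi_P$ can be computed, together with $|\irrp G| = |{\rm Lin}(P)|$, forces each $\chi$ to contribute exactly one and the assignment to be a bijection. That the map is \emph{surjective} (equivalently injective, since the sets are finite of equal size) is where I expect the main obstacle: one must show every linear $\lambda \in \irr P$ is $\lambda_\chi$ for some $\chi$, which amounts to running the reduction in reverse and checking that the extension produced by Theorem~\ref{key} restricts correctly — the bookkeeping across the Clifford correspondence (ensuring the linear constituent downstairs lifts to a linear constituent of $\chi_P$ and not to a higher-degree piece absorbed into $\Delta$) is the delicate point, and it is exactly here that self-normalization of $P$ (used to kill fully ramified sections at every stage) is indispensable.
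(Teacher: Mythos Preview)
The paper does \emph{not} prove Theorem~\ref{Navarro}; it is stated there as a known result due to Navarro~\cite{Nav03}, and the surrounding discussion merely points to~\cite{NTV14} as giving a new method of proof whose key ingredient is the extension result (Theorem~\ref{extension}, reproved in the $p$-solvable case as Theorem~\ref{key}). So there is no proof in the paper to compare your proposal against.

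That said, your sketch is broadly in the spirit of the~\cite{NTV14} approach the paper alludes to: reduce via Clifford theory along a minimal normal subgroup, and use the extension theorem to rule out fully-ramified obstructions. But as written it is an outline with real gaps rather than a proof. The $p$-group case of the minimal normal subgroup is handled only by a gesture (``$N\zent P$-type arguments''); the uniqueness argument by counting presupposes $|\irrp G|=|{\rm Lin}(P)|$, which is the McKay equality for $p$-solvable groups and is itself a nontrivial theorem you would need to invoke or prove; and you yourself flag surjectivity as the hard step without actually carrying it out. In the inductive reduction when $N$ is a $p'$-group, you also need to check that the Sylow normalizer remains self-normalizing in the stabilizer $G_\theta$ (this uses $P\leq G_\theta$ and $\norm{G_\theta}P\leq\norm GP=P$), and that the linear constituent tracked through the Clifford correspondent really survives restriction to $P$ rather than being absorbed into $\Delta$---this is precisely the bookkeeping you acknowledge as delicate, and it is where the extension theorem is doing the essential work.
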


Navarro's bijection is natural. Moreover, it is straight-forward to verify that it is both $\aut G_P$ and $\galn{\Q^{ab}}\Q$-equivariant. 

\smallskip

It is worth mentioning that, already in 2003, Navarro suspected that the $p$-solvability condition above could be replaced by the condition that $p$ is odd. 
This prediction was confirmed in \cite{NTV14}, where 
the authors also provided a new method to prove Theorem \ref{Navarro}. 
The following extension result is the key of the new method. Notice that, for $p$-solvable groups, the conditions below on simple groups are superfluous.

\begin{teo}[Navarro, Tiep, Vallejo, 2014]\label{extension}
Let $G$ be a finite group, $p$ any prime, $P \in \syl pG$, and assume that $P=\norm GP$. Let $L \nor G$.
Let $\chi \in \irrp G$, and let $\theta \in \irr L$ be $P$-invariant under $\chi$. Assume that all non-abelian simple groups  of order divisible by $p$ involved 
in $L$ 
satisfy the {\sl inductive Alperin-McKay condition} for $p$.
Then $\theta$ extends to $G_\theta$. 
\end{teo}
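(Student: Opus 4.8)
Here is the plan. By a chain of Clifford-theoretic reductions and an induction on $|L|$ we reduce to the case that $L$ is a minimal normal subgroup of $G$, and then split according to the structure of $L$; only the case where $L$ is non-abelian of order divisible by $p$ genuinely uses the inductive Alperin--McKay hypothesis. \textbf{Step 1 (reduce to $\theta$ $G$-invariant).} Since $\chi$ lies over $\theta$, Clifford theory gives $\chi_L=e\sum_i\theta_i$ with the $\theta_i$ the $|G:G_\theta|$ distinct $G$-conjugates of $\theta$; hence $|G:G_\theta|$ divides $\chi(1)$ and is coprime to $p$. As $\theta$ is $P$-invariant, $P\le G_\theta$, so $P\in\syl p{G_\theta}$ and $\norm{G_\theta}P\le\norm GP=P$, giving $\norm{G_\theta}P=P$. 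The Clifford correspondent $\psi\in\irr{G_\theta|\theta}$ of $\chi$ has degree $\chi(1)/|G:G_\theta|$, again coprime to $p$, and the set of non-abelian simple groups involved in $L$ is unchanged. Replacing $(G,\chi)$ by $(G_\theta,\psi)$ we may assume $\theta$ is $G$-invariant; it then suffices to show it extends to $G$, and $\chi\in\irrp G$ will denote a fixed character over $\theta$.

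\textbf{Step 2 (reduce to $L$ minimal normal in $G$).} We induct on $|L|$; assume $L\ne1$ and let $N\le L$ be minimal normal in $G$. The constituents of $\theta_N$ form a single $L$-orbit, of size dividing $\theta(1)$ and hence coprime to $p$, so $P$ fixes one of them, say $\nu$, and all of them lie under $\chi$. If $\nu$ is not $G$-invariant then $G_\nu<G$; since $P\le G_\nu$ we get $\norm{G_\nu}P=P$, and the Clifford correspondence identifies the character triple $(G,L,\theta)$ with $(G_\nu,L_\nu,\psi)$, where $\psi$ is the Clifford correspondent of $\theta$ over $\nu$, a character of $L_\nu$ with $|L_\nu|<|L|$ that is covered by a $p'$-degree character of $G_\nu$; we finish by induction. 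If $\nu$ is $G$-invariant and $N<L$, then applying the statement inductively to $(G,N,\nu,\chi)$ gives an extension $\hat\nu\in\irr G$ of $\nu$; by Gallagher's theorem $\theta=\beta_0\,(\hat\nu)_L$ for a $G$-invariant $\beta_0\in\irr{L/N}$, and $\chi=\hat\gamma\,\hat\nu$ for some $\hat\gamma\in\irrp{G/N}$ over $\beta_0$, while $\norm G{PN}=PN$ gives $\norm{G/N}{PN/N}=PN/N$; so by induction $\beta_0$ extends to $G/N$ and hence $\theta$ extends to $G$. Thus we may assume $L=N$ is minimal normal in $G$.

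\textbf{Step 3 (abelian $L$, and $L$ a non-abelian $p'$-group).} If $\theta$ is trivial it extends; otherwise $\ker\theta\nor G$ is a proper subgroup of the minimal normal $L$, hence trivial, so $\theta$ is faithful. If $L$ is abelian it is then cyclic of prime order $r$, the faithful $G$-invariant $\theta$ forces $L\le\zent G$, and $\norm GP=P$ forces $\zent G\le\cent GP\le P$, so $L\le P$ and $r=p$. Now $[\theta]_G\in H^2(G/L,\C^{\times})$ lies in the image of $H^2(G/L,L)$ under the map induced by $\theta$, hence is killed by $p$; on the other hand $\chi$ corresponds to an irreducible projective character of $G/L$ with factor set representing $[\theta]_G$ of degree $\chi(1)/\theta(1)$ coprime to $p$, whose restriction to the $p$-group $PL/L$ — over which every irreducible projective character has $p$-power degree, since the Schur multiplier of a $p$-group is a $p$-group — must have a linear constituent; therefore $[\theta]_G$ restricts trivially to the Sylow $p$-subgroup $PL/L$ of $G/L$, so $o([\theta]_G)$ is coprime to $p$. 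Combining the two, $[\theta]_G=0$ and $\theta$ extends. If $L$ is non-abelian and of order coprime to $p$, then $L$ is a product of copies of some non-abelian simple $p'$-group $S$, the inductive Alperin--McKay hypothesis is vacuous, and extendibility follows from known (classification-dependent) extendibility results for characters of products of non-abelian simple groups together with the coprime-to-$p$ order of $[\theta]_G$ (which, via the same projective-character argument, localises the obstruction on the Sylow $q$-subgroups of $G/L$ for $q\ne p$, handled by the normal-Hall-subgroup extension theorem when $q\nmid|S|$ and by those extendibility results when $q\mid|S|$).

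\textbf{Step 4 (the core case, and the main obstacle).} There remains $L=S_1\times\dots\times S_k$ with $S_i\cong S$ non-abelian simple, permuted transitively by $G$, and $p\mid|S|$. Set $Q=P\cap L\in\syl pL$; by a Frattini argument $G=L\,\norm GQ$, and $\norm{\norm GQ}P=\norm GP=P$, while $\norm LQ=L\cap\norm GQ\nor\norm GQ$ is the direct product of $k$ copies of $\norm SR$ ($R\in\syl pS$), so $|\norm LQ|<|L|$. Using that $S$ satisfies the inductive Alperin--McKay condition for $p$ — through its automorphism-equivariant Alperin--McKay bijection, the induced bijection for the wreath product $\aut S\wr\mathrm{Sym}(k)$, and the theory of isomorphisms of (projective) character triples above a normal subgroup — one produces a character $\theta^{*}\in\irr{\norm LQ}$ and an isomorphism of character triples $(G,L,\theta)\cong(\norm GQ,\norm LQ,\theta^{*})$. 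This isomorphism transports $\chi$ to a $p'$-degree character of $\norm GQ$ over $\theta^{*}$ and preserves $G$-invariance and the self-normalizing property of $P$, so the new triple satisfies all the hypotheses with a strictly smaller normal subgroup; by induction $\theta^{*}$ extends to $\norm GQ$, and since character-triple isomorphisms preserve extendibility, $\theta$ extends to $G$. The essential difficulty lies exactly here: encoding the inductive Alperin--McKay condition for $S$, and its compatibility with $\aut S\wr\mathrm{Sym}(k)$, as an honest isomorphism of character triples requires the full Clifford theory of projective representations above a normal subgroup, and this is the only point at which the hypotheses on simple groups are used.
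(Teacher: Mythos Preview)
The paper does not actually prove Theorem~\ref{extension}; it is stated there as a result quoted from \cite{NTV14}, with the remark that the hypothesis on simple groups can in fact be removed (via \cite{NT16} and \cite{Val16}). What the paper \emph{does} prove is the $p$-solvable special case, Theorem~\ref{key}, so that is the only thing your attempt can be compared against directly.

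With that caveat, your Steps~1--2 are the same Clifford-theoretic reductions as in the paper's proof of Theorem~\ref{key}: pass to $G_\theta$, then peel off a smaller normal subgroup and induct. The paper organises the induction slightly differently (it picks any $M\lhd G$ with $1<M<N$ rather than a minimal normal one), but the content is identical.

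Your Step~3 in the abelian $p$-group case is correct but needlessly elaborate. The paper's argument is shorter: once $L\le P$, the restriction $\chi_P$ has a linear constituent $\mu$ (because $\chi(1)$ is prime to $p$ and $P$ is a $p$-group), and $\mu_L=\theta$ since $\theta$ is the unique constituent of $\chi_L$; so $\theta$ extends to $P$, and then to $G$ by Corollary~11.31 of \cite{Isa76}. No cohomology or reduction to $|L|=p$ is needed.

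Your Step~3 in the non-abelian $p'$ case is the one genuine weak spot. You wave at ``known (classification-dependent) extendibility results'' and a localisation of the obstruction, but give no argument. In fact this case is much easier than you suggest, and the paper's proof of Theorem~\ref{key} handles it cleanly (and the argument works verbatim beyond the $p$-solvable setting): from $\norm GP=P$ one gets $\cent LP\le\norm GP\cap L=1$, and since $P$ acts coprimely on the $p'$-group $L$, the Glauberman correspondence forces the unique $P$-invariant character of $L$ to be $1_L$; hence $\theta=1_L$, which extends trivially. You should replace your hand-waving here with this two-line argument.

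Your Step~4 correctly identifies that the inductive Alperin--McKay condition enters precisely to manufacture a character-triple isomorphism $(G,L,\theta)\cong(\norm GQ,\norm LQ,\theta^{*})$ allowing a descent to a strictly smaller normal subgroup, and that this is the only place the hypothesis on simple groups is used. As written it is a sketch rather than a proof (the passage from the condition on $S$ to the wreath product and then to an honest isomorphism of character triples over $G$ needs the full apparatus of \cite{Spa13} and \cite{NTV14}), but the strategy is right; since the paper itself does not carry out this step either, there is nothing further to compare.
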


 The inductive Alperin-McKay condition was defined in \cite{Spa13}. Fortunately, the inductive Alperin-McKay condition is not really necessary in this context. In \cite{NT16}, the authors prove the above extension result without conditions on simple groups for $p=2$. Indeed, by adapting the proof of Theorem 3.3 of \cite{NT16} for odd primes, the hypotheses on simple groups in Theorem \ref{extension} can be removed in general (see Theorem 3.6  of \cite{Val16}).

\section{Solvable groups with self-normalizing Sylow $p$-subgroups}
Suppose that $G$ is solvable and $\norm G P=P$ for some prime $p$, and either $p=2$ or $G$ has odd order (i.e. $|G:P|$ is odd). 
By Theorem \ref{Navarro}, if $\chi \in \irrp G$, then $\chi_P=\chi^*+\Delta$, where $\chi^*(1)=1$ and either $\Delta$ is zero or all its constituents 
have degree divisible by $p$. By Theorem \ref{Isaacs}, there is some $\xi \in \irr P$ linear, such that, $\chi$ and $\xi$ correspond. 
Are $\chi^*$ and $\xi$ the same character? This is the same as asking if $\xi$ is a constituent of $\chi_P$. Following the algorithm for constructing $\xi$ in Theorem 9.1 and Theorem 10.6 of \cite{Isa73}, one may suspect that problems can arise only if there are fully ramified situations. We show below (in Theorem \ref{key}) that fully ramified situations do not occur when computing $\xi$. 

\smallskip

We first list some elementary results that will be often used from now on. 

\begin{lem} Let $K\nor G$ and $H\leq G$ be such that $G=KH$ and $N=K\cap H$. Let $\phi \in \irr K$ be $G$-invariant and suppose that $\phi_N=\theta$. Then
restriction defines a bijection
$$\irr{G|\phi}\rightarrow \irr{H|\theta}.$$
\end{lem}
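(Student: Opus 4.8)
The plan is to exploit the standard theory of character triples and Clifford theory, recognizing this as an instance of the well-known correspondence attached to a group factorization $G=KH$ with $K\nor G$. First I would observe that since $\phi$ is $G$-invariant and $G=KH$, the subgroup $H$ acts on $\irr{K|\ldots}$ compatibly, and the hypothesis $\phi_N=\theta$ (so in particular $\theta$ is irreducible, i.e.\ $\phi$ restricts irreducibly to $N$) is the crucial nondegeneracy condition. The cleanest route is via Gallagher-type arguments: for $\chi\in\irr{G|\phi}$, Clifford theory over $K$ gives $\chi_K=e\phi$ for some integer $e$ (as $\phi$ is $G$-invariant), and similarly for $\psi\in\irr{H|\theta}$ we have $\psi_N=f\theta$. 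I would then show $\chi_H\in\irr{H|\theta}$ and that this assignment is the desired bijection.

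The key steps, in order: (1) Fix $\chi\in\irr{G|\phi}$. Since $G/K\cong H/N$ via the natural isomorphism induced by $G=KH$, $K\cap H=N$, restriction of characters from $G/K$ to $H/N$ is a bijection $\irr{G/K}\to\irr{H/N}$, and Gallagher's theorem (Corollary 6.17 of \cite{Isa76}) applied to the $G$-invariant $\phi$ extending to $G$ — one first checks $\phi$ extends to $G$; indeed since $\phi$ is $G$-invariant with $\phi_N$ irreducible, the character triple $(G,K,\phi)$ is isomorphic to $(G/K\cdot\text{something})$... more directly, I would use that $\phi_N=\theta$ irreducible forces, by a counting/determinant argument or by Problem 6.12-type reasoning, that the relevant cohomology class vanishes, so $\phi$ extends to $\hat\phi\in\irr G$. (2) With $\hat\phi$ in hand, $\irr{G|\phi}=\{\beta\hat\phi : \beta\in\irr{G/K}\}$ by Gallagher, and similarly $\hat\phi_H$ extends $\theta$ to $H$ (it lies over $\theta=\phi_N$ and has the right degree), so $\irr{H|\theta}=\{\gamma(\hat\phi_H):\gamma\in\irr{H/N}\}$. (3) Restriction then sends $\beta\hat\phi\mapsto\beta_H\cdot\hat\phi_H$; since $\beta\mapsto\beta_H$ is the bijection $\irr{G/K}\to\irr{H/N}$, the composite $\irr{G|\phi}\to\irr{H|\theta}$ is a bijection. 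One must check $\beta\hat\phi$ is irreducible (Gallagher) and that distinct $\beta$ give distinct restrictions (again Gallagher, now on the $H$ side).

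The main obstacle I expect is step (1): verifying that $\phi$ actually extends to $G$, rather than merely that the character triple $(G,K,\phi)$ is ``good.'' The hypothesis that does the work is $\phi_N=\theta\in\irr N$, i.e.\ $\phi$ restricts irreducibly to the complement's intersection; combined with $G$-invariance this should kill the obstruction cocycle. An alternative that sidesteps explicit extension: work directly with character triples, noting $(G,K,\phi)$ and $(H,N,\theta)$ are isomorphic character triples (the isomorphism $G/K\to H/N$ together with the compatible projective representations), and then the bijection $\irr{G|\phi}\to\irr{H|\theta}$ is the one transported across the character-triple isomorphism — one then separately checks it coincides with ordinary restriction by comparing degrees and constituents over $K$ and $N$. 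I would likely present the proof via the explicit extension $\hat\phi$, as it is the most transparent, citing Corollaries 6.17, 6.28 and the character-triple machinery (Chapter 11) of \cite{Isa76} as needed, and flagging the degree bookkeeping $\chi(1)=\beta(1)\phi(1)$, $\chi_H(1)=\beta(1)\theta(1)$ as the routine verification that the map lands in the right set.
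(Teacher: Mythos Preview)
The paper does not prove this lemma itself; it simply refers the reader to Lemma~2.7 of \cite{Isa82}. Your primary line of argument, however, has a genuine gap: the claim that $\phi_N=\theta\in\irr N$ together with $G$-invariance ``kills the obstruction cocycle'', so that $\phi$ extends to $G$, is false in general. A minimal counterexample is the degenerate instance $G=H=Q_8$, $K=N=\zent{Q_8}$, with $\phi=\theta$ the nontrivial linear character of the centre: all hypotheses of the lemma are satisfied, yet $\phi$ does not extend to $Q_8$, since every linear character of $Q_8$ is trivial on $\zent{Q_8}$ and the unique member of $\irr{Q_8\mid\phi}$ has degree~$2$. Nondegenerate examples (with $K\neq N$) are obtained by replacing $G$ by $Q_8\times A$ and $K$ by $\zent{Q_8}\times A$ for any nontrivial group $A$, keeping $H=Q_8\times 1$. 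So your steps (1)--(3), and hence the version of the proof you say you would ``likely present'', do not go through.

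Your alternative via character triples is the correct route and is essentially how the lemma is actually established. The key point---which you already identify---is that a projective representation $\Phi$ of $G$ extending an ordinary representation affording $\phi$ restricts to a projective representation of $H$ extending one affording $\theta$, on the \emph{same} space (since $\phi(1)=\theta(1)$); hence the factor sets of $(G,K,\phi)$ and $(H,N,\theta)$ agree under the isomorphism $H/N\cong G/K$. One then checks that the resulting character-triple bijection $\irr{G\mid\phi}\to\irr{H\mid\theta}$ is given by ordinary restriction. You should promote this alternative to the main argument and drop the extension claim entirely.
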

\begin{proof}  See Lemma 2.7 of \cite{Isa82}.
\end{proof}

\begin{lem}\label{Pinvariance} Let $N\nor G$ and let $\chi \in \irr G$. Assume that $\chi_N$ has some $P$-invariant constituent, where $P\in \syl p G$. Then every two of them are $\norm G P$-conjugate. In particular, this happens if $\chi$ has degree coprime to $p$.
\end{lem}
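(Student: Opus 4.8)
The statement is a classical Clifford-theory fact, so the plan is to reduce it to a counting/conjugacy argument on the constituents of $\chi_N$. First I would invoke Clifford's theorem: the set $\{\theta_1,\dots,\theta_t\}$ of irreducible constituents of $\chi_N$ forms a single $G$-orbit, so $G$ acts transitively on it and $\chi_N = e\sum_{i=1}^t \theta_i$ for a common multiplicity $e$. Since $N\nor G$ and $P\in\syl pG$, the subgroup $\norm GP$ also acts on this orbit; the task is to show $\norm GP$ is transitive on the $P$-fixed points of the orbit. The natural move is to identify the $G$-set $\{\theta_1,\dots,\theta_t\}$ with the coset space $G/G_{\theta_1}$, where $G_{\theta_1}$ is the stabilizer (inertia group) of $\theta_1$. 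A constituent $\theta_i$ is $P$-invariant precisely when the corresponding coset is fixed by $P$ acting by left (or right, depending on convention) translation, i.e. when $P$ lies in a conjugate of $G_{\theta_1}$.

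The heart of the argument is then the following purely group-theoretic claim: if a finite group $G$ has $N$-set (here $G/G_\theta$) on which a Sylow $p$-subgroup $P$ has at least one fixed point, then $\norm GP$ acts transitively on the set of $P$-fixed points. To see this, suppose $Px_i = x_i$ and $Px_j = x_j$ for two $P$-fixed points $x_i = gG_\theta$ and $x_j = hG_\theta$; then $P$ and a $G$-conjugate of $P$ both lie inside the point stabilizers $\tw{g}G_\theta$ and $\tw{h}G_\theta$ respectively, and more to the point $P$ and ${}^{h g^{-1}}P$ are both Sylow $p$-subgroups of the stabilizer of $x_j$. By Sylow's theorem in that stabilizer, these two Sylow subgroups are conjugate by an element $s$ of the stabilizer of $x_j$; then $sh g^{-1}$ normalizes $P$ and carries $x_i$ to $x_j$. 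Translating back to characters, this exhibits an element of $\norm GP$ conjugating $\theta_i$ to $\theta_j$, which is exactly the assertion.

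Finally, for the ``in particular'' clause, I would use that when $p\nmid\chi(1)$, the number $t$ of constituents and the ramification $e$ satisfy $\chi(1)=et\,\theta(1)$ with $t = |G:G_\theta|$, and one argues that $p\nmid t$: a prime dividing $t=|G:G_\theta|$ that also divided $\chi(1)$ would be fine, but here $p\mid|G:G_\theta|$ is impossible to rule out directly, so instead I would argue that $p\nmid e\theta(1)$ forces $p\nmid|G:G_\theta|$ is not automatic; the cleaner route is: since $p\nmid\chi(1)$ and $\chi(1)=e t\theta(1)$, we need $p\nmid t$, which indeed follows because $t=|G:G_\theta|$ divides $\chi(1)/\theta(1)$ only up to the factor $e$—so the honest statement is that $P$ fixes a point of $G/G_\theta$ because $p\nmid|G:G_\theta|$, and this last divisibility holds as $|G:G_\theta|\mid\chi(1)/\theta(1)$... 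I would in fact just cite the standard fact (e.g. from \cite{Isa76}) that a $p'$-degree character has a $P$-invariant constituent under any normal subgroup, which is exactly the hypothesis needed to apply the first part. The main obstacle is purely expository: stating the coset-space translation and the Sylow-conjugacy step cleanly; there is no real mathematical difficulty, and indeed the lemma is quoted with a reference in such treatments.
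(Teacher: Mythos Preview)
Your approach to the main assertion is correct and is essentially the paper's own argument: both proofs pick two $P$-invariant constituents, observe that $P$ and a conjugate of $P$ lie in a common inertia group, and invoke Sylow's theorem there to produce the required element of $\norm G P$. Your coset-space formulation is just a repackaging of the same conjugacy step the paper carries out directly with characters; there is no substantive difference.

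The ``in particular'' clause, however, is muddled in your write-up. You have all the ingredients but talk yourself in circles. From Clifford's theorem $\chi(1)=e\,t\,\theta(1)$ with $t=|G:G_\theta|$, so $t$ divides $\chi(1)$ outright; there is no ``up to the factor $e$'' issue. Hence $p\nmid\chi(1)$ forces $p\nmid|G:G_\theta|$, so $G_\theta$ contains a Sylow $p$-subgroup of $G$, i.e.\ some $G$-conjugate of $\theta$ is $P$-invariant. This is exactly the one-line argument the paper gives. Clean that paragraph up and the proof is complete.
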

\begin{proof} Let $\theta \in \irr N$ be the $P$-invariant constituent of $\chi_N$  by hypothesis. Suppose that $\theta^x$ is $P$-invariant for some $x \in G$. Then $P, P^{x^{-1}}\sbs G_\theta$. Hence there exists some  $y\in G_\theta$ such that $P^{yx}=P$, note that $\theta^{yx}=\theta^x$ and $yx \in \norm G P$. 

If $\chi \in \irrp G$ and $\theta$ lies under $\chi$, then $|G:G_\theta|$ is a $p'$-number, hence $P^x\sbs G_\theta$ for some $x \in G$ so $\theta^{x^{-1}}$ is $P$-invariant. 
\end{proof}

We will also make use of the following consequences of the Glauberman correspondence. 

\begin{lem}\label{coprime} Let $P$ be a p-group that acts on a group $K$ stabilizing $N\nor K$. Assume that $K/N$ is a $p'$-group. 
Let  $\theta \in \irr N$ be $P$-invariant. Then $\theta^K$ has a $P$-invariant constituent, which is unique  if $\cent{K/N} P=1$.
\end{lem}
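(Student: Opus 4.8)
The statement is a standard consequence of the Glauberman correspondence, so the plan is to reduce to that correspondence and then quote it. First I would set $\bar K = K/N$, a $p'$-group on which $P$ acts (the action being well-defined since $P$ stabilizes $N$), and observe that since $\theta$ is $P$-invariant, $\theta^K$ is a $P$-invariant character of $K$; hence $P$ permutes the constituents of $\theta^K$. Because $\theta$ is $P$-invariant, Clifford theory (Theorem~6.11 of \cite{Isa76}) identifies $\irr{K\mid\theta}$ with $\irr{K_\theta/N\mid \hat\theta}$-type data, but more directly: every constituent of $\theta^K$ lies over $\theta$, and the $P$-orbits on these constituents are in bijection (via induction and the fact that $|K:K_\theta|$ divides $|K:N|$, a $p'$-number) with the $P$-orbits on $\irr{\bar K}$-objects; a counting/coprimality argument then shows at least one $P$-orbit of constituents has $p'$-size, hence by coprimality of $|P|$ and that orbit size, that orbit is a single point. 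That gives existence.

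For uniqueness under the hypothesis $\cent{\bar K}P=1$, I would invoke the Glauberman correspondence directly. The pair $(P,\bar K)$ with $P$ a $p$-group acting coprimely on $\bar K$ gives a canonical bijection between $\irr_P(\bar K)$ (the $P$-invariant irreducible characters of $\bar K$) and $\irr(\cent{\bar K}P)$; when $\cent{\bar K}P=1$ this forces $|\irr_P(\bar K)|=1$, i.e. the only $P$-invariant irreducible character of $\bar K$ is the trivial one. Transporting through Clifford theory above $\theta$: a $P$-invariant constituent of $\theta^K$ corresponds to a $P$-invariant irreducible character of the relevant $p'$-section, of which there is exactly one. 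Concretely, if $\eta_1,\eta_2\in\irr K$ are both $P$-invariant constituents of $\theta^K$, then restricting to the unique Clifford-theoretic structure over $\theta$ and applying the Glauberman uniqueness shows $\eta_1=\eta_2$.

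The cleanest route, and the one I would actually write, is to cite the literature: this is essentially Lemma~2.1 (or a similarly numbered elementary lemma) in work of Navarro, or it follows from combining the Glauberman correspondence (see Chapter~13 of \cite{Isa76} or the treatment in \cite{Nav18}) with standard Clifford theory. So the proof would read: ``By the Glauberman correspondence applied to the coprime action of $P$ on $K/N$, together with Clifford theory above the $P$-invariant character $\theta$, $\theta^K$ has a $P$-invariant constituent, and this constituent is unique precisely when $\cent{K/N}P=1$.''

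The main obstacle is purely bookkeeping: making the identification between $P$-orbits on $\irr{K\mid\theta}$ and $P$-invariant characters of the $p'$-group $K/N$ precise, since $\theta$ need not extend to $K$ and one must pass through $K_\theta$ and a projective-representation or character-triple argument. But under the $P$-invariance hypothesis this is routine character-triple isomorphism machinery (Chapter~11 of \cite{Isa76}), and since the lemma is stated as ``elementary'' and used as a black box, a one-line citation to the Glauberman correspondence and Clifford theory is the appropriate level of detail.
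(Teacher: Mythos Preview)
Your proposal is correct and matches the paper's approach: the paper's entire proof is the single line ``Follows from Theorem 13.31 and Problem 13.10 of \cite{Isa76},'' which is precisely the Glauberman-type coprime action machinery from Chapter~13 of Isaacs that you invoke. The only difference is cosmetic: the paper pins down the exact references (13.31 for existence of a $P$-invariant constituent over $\theta$, Problem 13.10 for uniqueness when $\cent{K/N}P=1$), whereas you gesture more broadly at the Glauberman correspondence and Clifford theory before concluding that a one-line citation suffices.
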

\begin{proof}Follows from Theorem 13.31 and Problem 13.10 of \cite{Isa76}.
\end{proof}

Having self-normalizing Sylow $p$-subgroups is related to the hypothesis on the centralizer in the above Lemma in the following way.

\begin{lem}\label{coprimeaction}
Suppose that $K\nor G$ is complemented by $H$. Then $H$ is self-normalizing in $G$ if, and only if, $\cent K H=1$. 
\end{lem}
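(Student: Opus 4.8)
The plan is to prove Lemma \ref{coprimeaction} by a direct argument using the semidirect product structure $G = KH$ with $K \cap H = 1$, analyzing the structure of $\norm G H$.

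\textbf{Forward direction.} Suppose first that $\cent K H = 1$; I want to show $\norm G H = H$. Take $g \in \norm G H$. Since $G = KH$, write $g = kh$ with $k \in K$ and $h \in H$. Then $H = H^g = H^{kh} = (H^k)^h$, so $H^k = H^{h^{-1}}\cdot\!$—wait, more carefully: applying $h^{-1}$ (which normalizes $H$) gives $H = H^{gh^{-1}} = H^{kh h^{-1}} = H^k$. So it suffices to show that $k \in K$ with $H^k = H$ forces $k = 1$; since then $g = kh = h \in H$. For this, observe that $H^k = H$ means $k \in \norm G H \cap K$. For each $x \in H$, we have $x^k \in H$, and also $x^k = x (x^{-1}x^k) = x [x,k]$ with $[x,k] \in K$ because $K \nor G$; but $x^{-1} x^k = [x,k]$ must also lie in $HK$'s... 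I would argue instead that $x^{-1}x^k \in H$ (product of $x^{-1} \in H$ and $x^k \in H$) and $x^{-1}x^k \in K$ (since $K \nor G$ and $x^k = k^{-1}xk$... actually $x^{-1}k^{-1}xk = [x,k] \in K$), hence $x^{-1}x^k \in H \cap K = 1$, so $x^k = x$. Thus $k$ centralizes $H$, i.e. $k \in \cent K H = 1$, giving $k = 1$ as desired.

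\textbf{Reverse direction.} Conversely, suppose $\norm G H = H$; I want $\cent K H = 1$. If $1 \neq c \in \cent K H$, then $c$ normalizes $H$ (indeed centralizes it), so $c \in \norm G H = H$, whence $c \in H \cap K = 1$, a contradiction. Hence $\cent K H = 1$.

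\textbf{Main obstacle.} The only genuinely delicate point is the commutator bookkeeping in the forward direction—making sure that for $k \in K$ normalizing $H$, the element $x^{-1}x^k$ simultaneously lies in $H$ and in $K$. This is where I use both that $K \nor G$ (so $x^{-1}x^k = [x^{-1},k^{-1}]^{\,\cdot}$, a conjugate manipulation landing in $K$) and that $x, x^k \in H$. Everything else is routine manipulation in the semidirect product. I expect the whole argument to be short, and in the solvable self-normalizing-Sylow setting one then applies it with $K$ a normal $p$-complement situation or more generally via Lemma \ref{coprime}, though that application is outside the scope of this lemma itself.
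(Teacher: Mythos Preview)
Your proof is correct and follows essentially the same idea as the paper: the core observation in both is that $[\norm K H, H] \sbs H \cap K = 1$, forcing $\norm K H = \cent K H$. The paper simply compresses both directions into the single line $\norm G H \cap K = \norm K H = \cent K H$, from which the equivalence is immediate via $\norm G H = (\norm G H \cap K)H$, whereas you unfold the two implications separately.
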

\begin{proof} Note that $\norm G H \cap K=\norm K H=\cent K H$ since $[\norm K H,  H]\sbs H\cap K=1$.
\end{proof}

\smallskip

For the sake of completeness, we give an elementary proof of the $p$-solvable case of Theorem \ref{extension} following \cite{NT16}.
\begin{teo}\label{key}
Let $G$ be $p$-solvable and let  $N \nor G$. Suppose that $\norm G P =P$, where $P\in \syl p G$. If $\chi \in \irrp  G$ and $\theta \in \irr N$ lies under $\chi$, then $\theta$ extends to $G_\theta$.
\end{teo}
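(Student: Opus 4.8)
The plan is to reduce to the case where $\theta$ is $G$-invariant and then argue by induction on $|G:N|$.

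For the reduction, by Lemma~\ref{Pinvariance} (applied to $\chi$ and $N\nor G$) some $G$-conjugate of $\theta$ is $P$-invariant, and since $\theta^g$ extends to $G_{\theta^g}=(G_\theta)^g$ precisely when $\theta$ extends to $G_\theta$, we may assume $\theta$ is $P$-invariant, so $P\le G_\theta$. As $|G:G_\theta|$ divides $\chi(1)$, it is coprime to $p$, whence $P\in\syl p{G_\theta}$ and $\norm{G_\theta}P\le\norm GP=P$; moreover the Clifford correspondent of $\chi$ over $\theta$ has degree $\chi(1)/|G:G_\theta|$, still coprime to $p$, so it lies in $\irrp{G_\theta}$ and over $\theta$. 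Replacing $G$ by $G_\theta$ we may assume $\theta$ is $G$-invariant, and must show $\theta$ extends to $G$; the case $N=G$ being trivial, assume $N<G$ and induct on $|G:N|$. Throughout I use the following observation: if $N\le M\nor G$ with $M/N$ a $p$-group, then a constituent of $\chi_M$ lying over $\theta$ has degree dividing both the $p$-power $|M:N|$ and the $p'$-number $\chi(1)$, hence equals $\theta(1)$; thus $\theta$ extends to such $M$, and in fact has a $P$-invariant extension to $M$.

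Suppose first that $\oh p{G/N}\ne1$, and choose $N<M\nor G$ with $M/N$ a $p$-chief factor of $G$. By the observation $\theta$ has a $P$-invariant extension to $M$; since $M/N$ is a $p$-group, a standard cohomological argument (the obstruction to a $G$-invariant extension lies in a group on which restriction to $P$ is injective) upgrades this to a $G$-invariant extension $\eta\in\irr M$ of $\theta$, and a Clifford-theoretic manipulation produces a $\chi'\in\irrp G$ lying over $\eta$. Then $(G,M,\eta)$ satisfies all the hypotheses and $|G:M|<|G:N|$, so by induction $\eta$, hence $\theta$, extends to $G$.

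It remains to treat $\oh p{G/N}=1$; here, $G/N$ being $p$-solvable and non-trivial, $L/N:=\oh{p'}{G/N}\ne1$ with $N<L\nor G$. A Frattini argument shows $\norm G{PN}=PN$, so $PN/N$ is a self-normalizing Sylow $p$-subgroup of $(PL)/N$ complementing the normal $p'$-subgroup $L/N$; Lemma~\ref{coprimeaction} then gives $\cent{L/N}P=1$, and Lemma~\ref{coprime} provides a unique $P$-invariant constituent $\psi\in\irr{L|\theta}$ of $\theta^L$. By Lemma~\ref{Pinvariance}, $\chi_L$ has a $P$-invariant constituent, necessarily $\psi$, so $\chi$ lies over $\psi$, and arguing as in the reduction, $G_\psi$ is $p$-solvable with self-normalizing Sylow $p$-subgroup and carries a character of $p'$-degree over $\psi$. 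The heart of the proof — and the step I expect to be the main obstacle — is to show that $\psi(1)=\theta(1)$ (so that no fully ramified section occurs between $N$ and $L$ and $\psi$ genuinely extends $\theta$) and that $\psi$ is $G$-invariant. This is exactly where $\norm GP=P$ must be used beyond its consequence $\cent{L/N}P=1$: passing to a character triple $(L,N,\theta)\cong(L^{*},N^{*},\theta^{*})$ with $N^{*}$ central in $L^{*}$ and $\theta^{*}$ faithful and linear, and carrying along the action of $G$, one analyses the coprime action of $P$ on $L^{*}$; a fully ramified configuration of the kind that occurs in the general Isaacs setting — as in the Remark involving ${\rm SL}(2,3)$ — would force the $P$-fixed points of $L^{*}$, hence of $L$ itself, to contain a non-trivial subgroup centralized by $P$, contradicting $\norm GP=P$, while the Glauberman correspondence attached to the action of $P$ on the $p'$-quotient $L/N$ is what makes this precise and simultaneously forces $G$-invariance of $\psi$. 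Granting this, $\psi$ is a $G$-invariant extension of $\theta$ to $L$ lying under a character of $\irrp G$, and since $|G:L|<|G:N|$ the inductive hypothesis applied to $(G,L,\psi)$ extends $\psi$, and therefore $\theta$, to $G$, completing the proof.
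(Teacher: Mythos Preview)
Your reduction to $G$-invariant $\theta$ and your treatment of the case $\oh p{G/N}\neq 1$ are essentially fine (though the ``cohomological argument'' is unnecessary: the extension $\eta$ of $\theta$ to $M$ that lies under $\chi$ has $G$-orbit of size dividing $\chi(1)$, hence of $p'$-size, inside the set of all extensions of $\theta$ to $M$, which has $p$-power cardinality $|{\rm Lin}(M/N)|$; so the orbit is a singleton and $\eta$ is already $G$-invariant and under $\chi$).

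The genuine gap is in the $p'$-case. You explicitly flag the equality $\psi(1)=\theta(1)$ as the ``main obstacle'' and then only gesture at a character-triple argument. But that equality is \emph{equivalent} to $\theta$ extending to $L$: if $\theta$ extends to $L$ then, since $H^1(P,{\rm Lin}(L/N))=0$, some extension is $P$-invariant and hence equals $\psi$; conversely $\psi(1)=\theta(1)$ says exactly that $\psi$ is an extension. So you are assuming a special case of the theorem you are proving, and your character-triple sketch does not rescue this: the condition $\norm G P=P$ is a statement about the ambient group, not about the triple $(G,N,\theta)$, and it does not pass to the isomorphic triple $(G^*,N^*,\theta^*)$, nor does it restrict to $L$ (indeed $\norm L{P\cap L}=P\cap L$ generally fails, already when $N=1$). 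The $G$-invariance of $\psi$ is likewise unproven; since $G\neq LP$ in general, $P$-invariance and uniqueness do not force it.

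The paper sidesteps all of this by inducting on $|N|$ rather than on $|G:N|$. After handling an intermediate $1<M<N$ via Clifford theory, one is reduced to $N$ minimal normal. Then the $p'$-case is trivial: $\cent N P=1$ forces $\theta=1_N$ by Glauberman. The $p$-case is handled directly: a linear constituent of $\chi_P$ restricts to $\theta$, so $\theta$ extends to $P$, hence to every Sylow subgroup, hence to $G$ by Corollary~11.31 of \cite{Isa76}. The moral is that going \emph{down} in $N$ makes the hard case disappear, whereas going \emph{up} through $\oh{p'}{G/N}$ reproduces the very extension problem you are trying to solve.
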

\begin{proof}
Since $\chi$ has $p'$-degree, some Sylow $p$-subgroup of $G$ is contained in $G_\theta$. By conjugating $\theta$ by some element of $G$, we may assume that $P\sbs G_\theta$. Hence we may assume that $\theta$ is $G$-invariant.  First note that  by the Frattini argument, whenever $M\nor G$, we have that
$$\norm {G/M}{PM/M}=\norm G P M /M.$$
Hence the hypothesis on the normalizer of the Sylow is inherited by quotients of $G$. We proceed by induction on $|N|$.

Suppose that $M\nor G$ and $1<M < N$. Let $\tau\in \irr M$ be $P$-invariant under $\chi$. In particular, $\tau$ lies under $\theta$. Moreover $G=NG_\tau$, since for every $g \in G$ we have that $\tau^g=\tau^n$ for some $n \in N$. First, if $G=G_\tau$ then by induction $\tau$ extends to $\eta \in \irr G$. By Gallagher's theorem $\chi=\alpha \eta$ for some $\alpha \in \irrp {G/M}$ and $\theta=\beta \eta_N$ for some $\beta \in \irr{N/M}$. Note that $\beta$ is $G$-invariant, again by Gallagher's theorem. Since $\beta$ lies under $\alpha$ of $p'$-degree and $G/M$ has self-normalizing Sylow $p$-subgroups, by induction $\beta$ extends to $\gamma \in \irr{G/M}$. Then $\gamma\eta \in \irr G$ satisfies $\gamma_N\eta_N=\beta\eta_N=\theta$, as wanted. Thus, we may assume that $G_\tau<G$, and so $N_\tau<N$. Let $\phi \in \irr{N_\tau|\tau}$ be the Clifford correspondent of $\theta$ and let $\psi\in \irr{G_\tau}$ be under $\chi$ and over $\phi$. In particular, $\psi$ is the Clifford correspondent of $\chi$ and $\tau$. Hence $\psi$ has $p'$-degree.  By induction, $\phi$ extends to $G_\tau=G_\phi$. Let $\eta \in \irr{G_\tau}$ extend $\phi$. Then $(\eta^G)_N=(\eta_{N_\tau})^N=\phi^N=\theta$ extends $\theta$, as wanted. 

By the previous paragraph, we may assume that $N$ is a minimal normal subgroup of $G$. If $N$ is a $p$-group, then $N \leq P$. Since $\chi$ has $p'$-degree, some constituent $\mu$ of $\chi_P$ has $p'$-degree. In particular, $\mu$ is linear and lies over $\theta$, hence $\mu_N=\theta$. If $Q \in \syl q G$ for some prime $q \neq p$, we have that $\theta$ extends to $QN$ by Corollary 6.20 of \cite{Isa76}. According to Corollary 11.31 of \cite{Isa76}, $\theta$ extends to $G$ as wanted. 
Otherwise, $N$ is a $p'$-group. Then the hypothesis $\norm G P =P$ implies that $\cent N P=1$ by Lemma \ref{coprimeaction}. By the Glauberman correspondence $\theta$ must be $1_N$, which trivially extends to $G$. 
\end{proof}

We are now ready to prove the main result of this note.

\begin{teo}\label{main}
Let $G$ be solvable and let $P\in \syl p G$. Assume that $\norm G P =P$ and either that $p=2$ or $G$ has odd order. Then the character correspondences given by Theorems \ref{Isaacs} and \ref{Navarro} are the same.
\end{teo}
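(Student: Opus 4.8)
The plan is to induct on $|G|$, and I would begin by reducing the whole assertion to a single fact. Let $\chi\in\irrp G$, write $\xi\in\irrp{\norm GP}={\rm Lin}(P)$ for its Isaacs correspondent and $\chi^*\in{\rm Lin}(P)$ for its Navarro correspondent. By Theorem~\ref{Navarro}, $\chi^*$ is the \emph{unique} linear constituent of $\chi_P$, and $\xi$ is linear, so it suffices to prove that $\xi$ is a constituent of $\chi_P$. Note all hypotheses are in force: $|G:\norm GP|=|G:P|$ is odd (trivially if $p=2$, and because $|G|$ is odd otherwise), so Theorem~\ref{Isaacs} applies; and $G$ is $p$-solvable with $\norm GP=P$, so Theorems~\ref{Navarro} and \ref{key} apply. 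Since $\norm GP=P$ forces $p\mid|G|$ unless $G=1$ (the base case, which is trivial), I may assume $G\neq1$ and fix a minimal normal subgroup $N\nor G$; by Lemma~\ref{Pinvariance} there is a $P$-invariant constituent $\theta\in\irr N$ of $\chi_N$.

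Suppose first that $N$ is a $p'$-group. Then $P$ acts coprimely on $N$, and since $\norm{NP}P\sbs\norm GP=P$ and $P$ complements $N$ in $NP$, Lemma~\ref{coprimeaction} applied inside $NP$ gives $\cent NP=1$. By the Glauberman correspondence, $1_N$ is then the only $P$-invariant irreducible character of $N$, so $\theta=1_N$ and hence $N\sbs\ker{\chi}$; thus $\chi$ is inflated from $G/N$, whose Sylow $p$-subgroup $PN/N\cong P$ is self-normalizing by the Frattini argument. Both correspondences are compatible with this inflation (for Navarro's this is immediate from the isomorphism $P\cong PN/N$; for Isaacs' it is a feature of the construction in \cite{Isa73}, the relevant canonical character being trivial), so the inductive hypothesis for $G/N$ finishes this case.

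Now suppose $N$ is a $p$-group; then $N$ is elementary abelian and $N\sbs P$. If $G_\theta<G$, let $\psi\in\irrp{G_\theta}$ be the Clifford correspondent of $\chi$ over $\theta$ (it has $p'$-degree since $P\sbs G_\theta$), and note $\norm{G_\theta}P=P$ and $|G_\theta|<|G|$. Both correspondences commute with Clifford correspondence over $\theta$: for Navarro's, every constituent of $\psi_P$ is a constituent of $(\chi_{G_\theta})_P=\chi_P$, so the linear one is $\chi^*$; for Isaacs' it is again part of the construction in \cite{Isa73}. So the inductive hypothesis for $G_\theta$ applies. Finally, suppose $\theta$ is $G$-invariant; this is where Theorem~\ref{key} is the crucial input, as it guarantees that $\theta$ extends to $G=G_\theta$, say by a necessarily linear $\hat\theta\in\irr G$ — equivalently, the section at $N$ is not fully ramified. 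By Gallagher's theorem $\chi\mapsto\beta:=\chi\overline{\hat\theta}$ is a degree-preserving bijection $\irrp{G|\theta}\to\irrp{G/N}$, and $\lambda\mapsto\lambda\hat\theta_P$ carries ${\rm Lin}(P/N)$ onto ${\rm Lin}(P|\theta)$. Applying Navarro's theorem to $G/N$ (which has self-normalizing Sylow $p$-subgroup $P/N$), $\beta_P=\beta^*+\Delta'$ with $\beta^*$ linear and every constituent of $\Delta'$ of degree divisible by $p$ (viewing characters of $P/N$ as characters of $P$); hence $\chi_P=\beta^*\hat\theta_P+\Delta'\hat\theta_P$, where $\beta^*\hat\theta_P$ is linear and every constituent of $\Delta'\hat\theta_P$ has degree divisible by $p$, so $\chi^*=\beta^*\hat\theta_P$. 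The analogous identity for Isaacs' correspondence follows from \cite{Isa73} (the canonical character being trivial precisely because $\hat\theta$ exists). Thus both correspondences for $G$ are obtained from those for $G/N$ by twisting with $\hat\theta_P$, and the inductive hypothesis for $G/N$ completes the proof.

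The main obstacle I anticipate is the bookkeeping for Isaacs' correspondence: one must verify that the output of the algorithm of Theorems~9.1 and 10.6 of \cite{Isa73} really is compatible with the inflation, Clifford and Gallagher reductions above, i.e.\ that at each stage of Isaacs' chief-series recursion the canonical character of the relevant constellation is trivial. This is exactly what Theorem~\ref{key} delivers: it ensures that the sections occurring along the way are never fully ramified, so each step of the algorithm amounts to an honest restriction to (a subgroup of) $\norm GP$, and therefore the resulting $\xi$ is a constituent of $\chi_P$.
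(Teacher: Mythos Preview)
Your opening reduction is exactly right and matches the paper: since $\chi^*$ is the \emph{unique} linear constituent of $\chi_P$, it suffices to show that Isaacs' $\xi$ is a constituent of $\chi_P$. The gap is in how you then try to establish this.

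Your induction is organized around a minimal normal subgroup $N$ and the three cases ($N$ a $p'$-group; $N$ a $p$-group with $G_\theta<G$; $N$ a $p$-group with $\theta$ $G$-invariant). In each case you reduce the \emph{Navarro} side cleanly, but on the \emph{Isaacs} side you simply assert compatibility: ``for Isaacs' it is a feature of the construction in \cite{Isa73}'', ``the canonical character being trivial precisely because $\hat\theta$ exists'', etc. These assertions are not free. Isaacs' algorithm (Theorems~9.1, 10.6, 10.9 of \cite{Isa73}) is \emph{not} defined by descending a chief series of $G$; it is defined by setting $K=\Oh pG$, $L=K'$, $H=PL$ and passing from $\irrp G$ to $\irrp H$ through the constellation $(G,K,H,L,\phi,\theta)$, then iterating inside $H$. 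Your inflation/Clifford/Gallagher reductions over a minimal normal $N$ do not in general coincide with one step of this procedure, so you would need an independent argument that Isaacs' map commutes with each of them. Your final paragraph concedes this is ``the main obstacle'' and then claims Theorem~\ref{key} ``delivers'' it; but Theorem~\ref{key} only tells you that certain $\theta$'s extend --- it says nothing about how Isaacs' recursion interacts with your chosen $N$.

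The paper avoids this entirely by \emph{following Isaacs' own recursion}. With $K=\Oh pG$, $L=K'$, $H=PL$, one takes the unique $P$-invariant $\theta\in\irr L$ under $\chi$ and its Clifford correspondent $\eta\in\irrp{G_\theta}$. Theorem~\ref{key} forces $\theta$ to extend to $G_\theta$; combined with Lemmas~\ref{coprime} and \ref{coprimeaction} this gives a unique $P$-invariant extension $\phi$ of $\theta$ to $K_\theta$, so the step $\irr{G_\theta|\phi}\to\irr{H|\theta}$ is literally restriction, and $\eta_H$ is an irreducible constituent of $\chi_H$. One then repeats inside $H$. Thus every step of Isaacs' algorithm is an honest restriction, and the final $\xi$ is a constituent of $\chi_P$. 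The moral is the one you identified --- Theorem~\ref{key} rules out fully ramified sections --- but it has to be applied \emph{along Isaacs' recursion}, not along an unrelated chief series.
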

\begin{proof} If $G$ is either a $p$-group or a $p'$-group, then there is nothing to prove, so $1<P<G$.

Given $\chi \in \irrp G$, the restriction $\chi_P$ has a unique linear constituent $\chi^*$ by Theorem \ref{Navarro}. Hence, in order to show that the two bijections are equal, it is enough to show that the character $\xi \in {\rm Lin }(P)$ corresponding to $\chi$ via Theorem \ref{Isaacs}  is a constituent of $\chi_P$. 

We compute $\xi$ by following the algorithm in Theorem 10.9 of \cite{Isa73}. Since $\norm G P=P$, we have that $\Oh{p'} G=G$ by the Frattini argument. Take $K=\Oh{p} G$ and $L=K'$, so that $L<K<G$, $G=KP$ and $K/L$ is an abelian $p'$-group. Write $H=PL$ so that $K\cap H=L$. Let $\theta \in \irr L$ be the unique $P$-invariant constituent of $\chi_L$ (by Lemma \ref{Pinvariance} together with the self-normalizing Sylow hypothesis)  and let $\eta \in \irr{G_\theta|\theta}$ be the Clifford correspondent of $\chi$. Note that $\chi_H=\eta_{H}+\Xi$, by MacKey's formula, where the constituents of $\Xi$ do not lie over $\theta$.

By Theorem \ref{key}, $\theta$ extends to $G_\theta$. Note that, by Gallagher's theorem, every character in $\irr{K_\theta|\theta}$ extends $\theta$. Note that $P$ acts on $K_\theta/L$ with $\cent{K_\theta/L} P=1$, by Lemma \ref{coprimeaction}. By Lemma \ref{coprime}, let $\phi$ be the unique $P$-invariant extension of $\theta$ to $K_\theta$, so that actually $\phi$ is $G_\theta$-invariant (using $G_\theta=K_\theta P$). Hence restriction defines a bijection $\irr{G_\theta|\phi} \rightarrow \irr{H | \theta}$.

Since $\eta$ has $p'$-degree, then $\eta_{K_\theta} \in \irr{K_\theta |\theta}$ by Corollary 11.29 of \cite{Isa76}. In particular, $\eta \in \irr{G_\theta|\phi}$, so $\eta_H\in \irr{H|\theta}$. Notice that we have proven that $\chi_{H}=\eta_{H}+\Xi,$ where $\eta_{H}\in \irrp{H|\theta}$ and the constituents of $\Xi$ do not lie over $\theta$.

Now $H$ is a group with a self-normalizing Sylow $p$-subgroup and we can repeat the above process with respect to $\eta_{H}\in \irrp H$. We can keep proceeding like this until we reach a linear character $\xi$ of $P$. At every step, we are choosing a constituent of the restriction of $\chi$ to some subgroup containing $P$, so in the end $\xi$ is a constituent of $\chi_P$, as wanted.   
\end{proof}

\end{document}